\newtheorem{Lemma}      {Lemma} [section]
\newtheorem{Theorem}    [Lemma] {Theorem}
\newtheorem{Proposition}[Lemma] {Proposition}
\numberwithin{equation}{section}
\newcommand{\Stab}{\mathop{\mathrm{Stab}}}
\newcommand{\Sym}{\mathop{\mathrm{Sym}}}
\newcommand{\Alt}{\mathop{\mathrm{Alt}}}
\newcommand{\bl}{\mathop{\mathrm{bl}}}
\newcommand{\ws}{\widetilde}
\numberwithin{equation}{section}
\begin{document}
\title{Comparing the order and the minimal number of generators of a transitive permutation group}
\author{Gareth Tracey%
\thanks{Electronic address: \texttt{G.M.Tracey@warwick.ac.uk}}} 
\affil{Mathematics Institute, University of Warwick,\\Coventry CV4 7AL, United Kingdom}
\date{January 1, 2016}
\maketitle
\begin{abstract} We prove that if $G$ is a transitive permutation group, then $d(G)\log{|G|}/n^{2}$ tends to $0$ as $n$ tends to $\infty$.\end{abstract}
\section{Introduction and preliminary results}
\subsection{Introduction}
The purpose of this note is to prove the following:
\begin{Theorem}\label{LimitTheorem} Let $f(n)$ be the maximum of $d(G)\log{|G|}$ as $G$ runs over the transitive permutation groups of degree $n$. Then $f(n)/n^{2}$ tends to $0$ as $n$ tends to $\infty$.\end{Theorem}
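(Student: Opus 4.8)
The obvious bounds do not suffice on their own: one always has $\log|G|\le\log n!<n\log n$, and by the known bound on the number of generators of a transitive group one has $d(G)=O(n/\sqrt{\log n})$, but multiplying these gives only $O(n^{2}\sqrt{\log n})$, which is worse than the trivial target. The content of the theorem must therefore be that a transitive group cannot be simultaneously close to maximal in \emph{both} invariants, and the plan is to make this trade-off quantitative. Concretely, I would fix a small $\varepsilon>0$ and split according to the size of $d(G)$. If $d(G)\le\varepsilon n/\log n$, then $d(G)\log|G|\le(\varepsilon n/\log n)\cdot n\log n=\varepsilon n^{2}$, which already beats $n^{2}$ by the factor $\varepsilon$. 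The whole difficulty is thus concentrated in the groups with $d(G)>\varepsilon n/\log n$, and for these it suffices to prove that the order is correspondingly small, namely $\log|G|=o(n\sqrt{\log n})$; combined with $d(G)=O(n/\sqrt{\log n})$ this yields $d(G)\log|G|=o(n^{2})$.

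To control the two invariants together I would pass to the chief factors of $G$ and use the standard generation theory (Gaschütz, Lucchini--Menegazzo) to bound $d(G)$ by the generation numbers of the abelian and nonabelian chief factors, say $d(G)\le a_{\mathrm{ab}}(G)+a_{\mathrm{nonab}}(G)+O(1)$. The essential observation is that neither of these is the \emph{number} of chief factors: isomorphic factors lying in a single $G$-orbit — for instance the base group $T^{m}$ of a wreath product $T\wr P$, or a homogeneous module permuted transitively by the top group — are generated by a bounded number of elements and contribute almost nothing. Large generation number forces many \emph{inequivalent} chief factors, i.e. many distinct homogeneous components, and each such component costs points in the permutation domain. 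I would quantify this by an orbit count on $\Omega$: for a normal elementary abelian $V\trianglelefteq G$ all $V$-orbits have equal size $p^{s}$ and the point stabilizers $V_{\alpha}$ intersect trivially, so $V$ embeds in a product of $n/p^{s}$ copies of $C_{p}^{s}$ and hence $\dim V\le s\cdot n/p^{s}$; refined versions bounding the multiplicities of $\mathbb{F}_{p}$-chief factors against the permutation module $\mathbb{F}_{p}\Omega$ (of dimension $n$) are what tie $a_{\mathrm{ab}}(G)$ and $a_{\mathrm{nonab}}(G)$ to $n$ and to $\log|G|$ at the same time.

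The primitive case is the cheap end and I would clear it first: by Maróti's order bound a primitive group of degree $n$ other than $A_{n},S_{n}$ (and a short, well-understood list of product-action groups) satisfies $\log|G|=O((\log n)^{2})$ and $d(G)=O(\log n)$, so the product is $O((\log n)^{3})=o(n^{2})$, while $A_{n}$ and $S_{n}$ have $d(G)=2$ and product $O(n\log n)$. A general transitive group is then assembled from its primitive components along a chain of block systems, and one tracks how $a_{\mathrm{ab}}$, $a_{\mathrm{nonab}}$ and $\log|G|$ accumulate through the imprimitive structure, the nonabelian contribution being the easier of the two since each unit of nonabelian generation is paid for by a simple factor of order at least $60$.

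The main obstacle, which I expect to absorb the bulk of the work, is the abelian estimate: proving that $a_{\mathrm{ab}}(G)\,\log|G|=o(n^{2})$ uniformly over transitive $G$. This is precisely the regime in which $d(G)$ can approach its maximum $\sim n/\sqrt{\log n}$, and the task is to show that there the order is forced far below its maximum — of the form $\exp\!\big(o(n\sqrt{\log n})\big)$ rather than $\exp(\Theta(n\log n))$ — by optimizing the orbit-counting trade-off above against the total order $\sum(\dim A)\log p$ summed over the abelian chief factors $A$. Granting this, the product is $O(n^{2}/\sqrt{\log n})=o(n^{2})$ in every case; taking the maximum over all transitive $G$ of degree $n$ and letting $n\to\infty$ then gives $f(n)/n^{2}\to 0$.
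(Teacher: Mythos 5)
Your overall strategy --- trading $d(G)$ off against $\log|G|$, clearing the primitive case via Mar\'oti, and reducing to the claim that transitive groups needing many generators must have small order --- correctly identifies the shape of the theorem, and the easy half works: if $d(G)\le \varepsilon n/\log n$ then indeed $d(G)\log|G|\le \varepsilon n^{2}$. But the other half is a genuine gap, not a proof. The claim you need --- that every transitive group with $d(G)>\varepsilon n/\log n$ satisfies $\log|G|=o(n\sqrt{\log n})$, equivalently your ``abelian estimate'' $a_{\mathrm{ab}}(G)\log|G|=o(n^{2})$ --- is essentially the entire content of the theorem, and you explicitly defer it (``Granting this\dots''). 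Worse, this claim is \emph{stronger} than the theorem itself: in the regime $d(G)\approx n/\log n$ the theorem only forces $\log|G|=o(n\log n)$, so the statement cannot be bootstrapped from the result being proved or from anything weaker; it needs a new quantitative mechanism, and the tools you name do not visibly supply one. Your one concrete inequality --- $\dim V\le s\cdot n/p^{s}$ for a single normal elementary abelian $V$ --- does not accumulate over a chief series into a bound of the required strength: orbit sizes and stabilisers change from factor to factor, and nothing in your sketch couples the number of inequivalent chief factors to the \emph{order} of $G$ (rather than merely to $n$), which is exactly the coupling the theorem requires.

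What the paper does instead, and what your plan is missing, is a single mediating parameter that controls both invariants at once. From a primitive decomposition $W=R_{1}\wr\cdots\wr R_{t}$ it defines $d=\bl_{W}(G)$, essentially the largest degree of an alternating/symmetric primitive component (cut off below at the constant $c_{2}$). On one side, $\log|G|\le n\log d$ (Proposition \ref{OrderBound}, via Mar\'oti); on the other, Lemma \ref{MainLemma} --- proved by inducing modules up the wreath product and using that alternating groups contain soluble subgroups with at most two orbits, each of $p'$-length --- shows that an alternating component of degree $d$ beyond the first coordinate forces $d(G)\le 2a(\ws{R})\ws{s}/d+d(\ws{S})$, i.e.\ a large $d$ depresses $d(G)$ by a factor of $d$, not just by logarithmic factors. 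The case analysis of Section 3 then realises precisely the trade-off you postulate: either $d$ is small (at most $\max\{\log\ws{r},\log\ws{s}\}$ or equal to $r$), in which case the order bound wins, or $d$ is large, in which case the generation bound wins. If you want to rescue your chief-factor route, the missing ingredient is a lemma of this type tying the generation of induced modules (the abelian chief factors sitting inside base groups of block actions) to the degrees of the alternating components that are responsible for a large order; without it, your argument stalls exactly where you say the bulk of the work lies.
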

The notation $d(G)$ denotes the minimal number of generators for $G$. All logs are to the base $2$ unless otherwise stated. 

Our strategy for the proof of the theorem will be to bound $d(G)\log{|G|}$, for a fixed transitive group $G$, in terms of the degrees of a set of primitive components for $G$, and another invariant $\bl_{W}(G)$ of $G$ (we define $\bl_{W}(G)$, and the term \emph{primitive components} in Section 1.3). The key result in this direction is Lemma \ref{MainLemma}, which we prove in Section 2. Section 1.2 and 1.3 contain required preliminary and elementary results, while Section 3 is reserved for the proof of Theorem \ref{LimitTheorem}.  

\subsection{Minimal generator numbers in transitive permutation groups}
We begin with a bound on the minimal number of generators for a transitive permutation group of degree $n$.
\begin{Theorem}[{\bf\cite{GT}, Corollary 1.2}]\label{TransTheoremCor1} Let $G$ be a transitive permutation group of degree $n\ge 2$. Then $d(G)\le \lfloor c_{1}n/\sqrt{\log{n}}\rfloor$, where $c_{1}:=0.920584\hdots$.\end{Theorem}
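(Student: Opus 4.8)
The plan is to control $d(G)$ through the chief factors of $G$ and to show that the leading term comes entirely from the elementary abelian ones. First I would set up a reduction along a maximal chain of block systems of $G$ (the primitive components of $G$), so that $G$ is built by iterated imprimitive extensions; equivalently, I would pass to a chief series. Invoking Gasch\"utz's theorem and its quantitative refinements (Cossey--Gruenberg--Kov\'acs, Lucchini), one has $d(G)\le \mu(G)$, where $\mu(G)$ is the maximum over chief factors of the number of generators needed relative to that factor. The point is that $\mu(G)$ separates into a non-abelian contribution and, for each prime $p$, an abelian contribution measured by the multiplicities of the irreducible $\mathbb{F}_pG$-modules occurring as complemented chief factors.

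Next I would show the non-abelian part is of lower order. By the O'Nan--Scott theorem together with CFSG-based generation bounds -- every finite simple group is $2$-generated, almost simple and product-type primitive groups need boundedly many generators, and the non-abelian composition factors of a degree-$n$ transitive group are severely constrained in type and multiplicity by $n$ -- the non-abelian chief factors should force only $O(\log n)$ generators. This is absorbed into the error term and does not affect the leading constant.

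The \emph{crux}, and the source of the $1/\sqrt{\log n}$ factor, is the abelian part. For a fixed prime $p$ and irreducible $\mathbb{F}_pG$-module $V$, the contribution is the multiplicity of $V$ among the complemented chief factors divided by $\dim_{\mathbb{F}_p}\mathrm{End}_G(V)$. Transitivity of degree $n$ forces the corresponding elementary abelian sections to embed compatibly with the transitive action on each term of the block chain, and I would prove that the maximal achievable total multiplicity is a central-binomial / Sperner-type extremal quantity of order $n/\sqrt{\log n}$ (for $n=p^k$, of order $\binom{k}{\lfloor k/2\rfloor}$, whence the $\sqrt{\log n}$ in the denominator). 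Summing over the modules and over primes $p$, with the contributions decaying geometrically in $p$, should give $d(G)\le (c_1+o(1))\,n/\sqrt{\log n}$.

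Finally I would optimize the constant: the extremal configuration is $p=2$ with a balanced block structure, and carrying the constants faithfully through the central-binomial estimate yields the explicit value $c_1=0.920584\ldots$, while an explicit family of transitive $2$-groups modelled on a maximal antichain in the Boolean lattice shows the bound is essentially attained, pinning $c_1$ down. I expect the main obstacle to be exactly this abelian estimate: establishing the sharp upper bound on chief-factor multiplicities under the transitivity constraint, and then propagating the constants \emph{exactly} through the reduction so as to recover the precise $c_1$ rather than merely the order of magnitude $n/\sqrt{\log n}$.
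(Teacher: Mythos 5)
First, a remark on the ground rules of the comparison: this paper does not prove Theorem \ref{TransTheoremCor1} at all --- it is imported verbatim from \cite{GT} (Corollary 1.2 there), so your proposal can only be measured against the proof in that reference. At the level of architecture, your sketch is genuinely close to what happens there and in the earlier work of Lucchini--Menegazzo--Morigi that it refines: generation is controlled through chief factors, the non-abelian factors contribute only a lower-order term, the abelian contribution is the heart of the matter, and the $\sqrt{\log n}$ saving does ultimately come from central-binomial asymptotics --- indeed the constant $b_{1}=2/\sqrt{\pi}$ appearing in Proposition \ref{pq1} (quoted from \cite{GT}) is precisely a central-binomial constant. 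So the plan points in the right direction.

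Nevertheless there are two genuine gaps. (1) The step you yourself label the crux --- the extremal bound of order $n/\sqrt{\log n}$ on the abelian contribution under the transitivity constraint --- is asserted, not proved, and it is essentially the entire content of \cite{GT}. Moreover, the mechanism there differs in substance from your Sperner-type multiplicity claim: one does not bound multiplicities of complemented chief factors by an antichain argument, but instead bounds $d_{G}$ of submodules of induced modules (Lemma \ref{pmodlemma} here, via the $p$-parts of orbit lengths of soluble subgroups of the top group) and runs an induction over a primitive decomposition. Note that this lemma is useless exactly when the top group is a transitive $p$-group and the module is in characteristic $p$ (every orbit length is then a power of $p$, so $\sum_{i}(t_{i})_{p}=s$ and there is no saving); that is precisely the case where the binomial coefficients arise, it requires a separate and delicate module-theoretic analysis in \cite{GT}, and your sketch offers no way to handle it. (2) The claim that an explicit family ``pins down'' $c_{1}=0.920584\ldots$ is not correct. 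The known extremal families (Kov\'acs--Newman type: $V\rtimes T$ with $T$ an iterated wreath product of copies of $C_{2}$ of degree $2^{k}$ and $V$ a section of its $\mathbb{F}_{2}$-permutation module) have degree $n=2^{k+1}$ and $d(G)\approx\binom{k}{\lfloor k/2\rfloor}$, which gives asymptotically about $\tfrac{1}{2}\sqrt{2/\pi}\,n/\sqrt{\log n}\approx 0.4\,n/\sqrt{\log n}$ --- well below $c_{1}n/\sqrt{\log n}$. Sharpness is known only for the order of magnitude, not for the constant: in \cite{GT}, $c_{1}$ is an artifact of the upper-bound bookkeeping (the worst cases of the induction and of small degrees, with the floored bound required to hold for every $n\ge 2$), not of a matching lower bound. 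There is also an internal inconsistency in your plan: your own crux estimate, taken at face value for $n=2^{k}$, would yield the constant $\sqrt{2/\pi}=0.7978\ldots$, not $0.920584\ldots$, so ``carrying the constants faithfully'' through your argument could not terminate at the stated value of $c_{1}$.
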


When $G$ is primitive, the bound for $d(G)$ is much sharper:
\begin{Theorem}[{\bf\cite{derek}, Theorem 1.1}]\label{derekthm} Let $H$ be a subnormal subgroup of a primitive permutation group of degree $r$. Then $d(H)\le\lfloor\log{r}\rfloor$, except that $d(H)=2$ when $r=3$ and $H\cong S_{3}$.\end{Theorem}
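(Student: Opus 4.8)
The plan is to run through the O'Nan--Scott classification of the primitive group $G$, bounding $d(H)$ in each socle type and using induction on the degree for the (almost) simple ingredients. Writing $N=\Soc(G)$, primitivity forces $N$ to be transitive of one of two shapes: either $N$ is elementary abelian of order $r$ (affine type), or $N=T_{1}\times\cdots\times T_{k}$ with the $T_{i}\cong T$ nonabelian simple, in which case $r$ is a power of $|T|$ or of a smaller component degree $m$. It is worth stressing that the subnormal hypothesis is essential and not a harmless weakening: a normal subgroup may require strictly more generators than the whole group (as when $V=\mathbb{F}_{2}^{d}$ carries a Singer cycle $C=C_{2^{d}-1}$, where $d(V\rtimes C)=2$ but $d(V)=d$), so one cannot simply bound $d(H)$ by $d(G)$. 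Throughout I would use $d(H)\le d(H/M)+d_{H}(M)$ for $M\trianglelefteq H$, where $d_{H}(M)$ counts the $H$-generators of $M$ as a normal subgroup, together with the fact that the subnormal subgroups contained in $N$ are, in the nonabelian case, subproducts of the factors $T_{i}$.

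In the affine case $G=V\rtimes G_{0}$, with $V=\mathbb{F}_{p}^{d}$, $r=p^{d}$ and $G_{0}\le\mathrm{GL}(d,p)$ irreducible, the extremal subnormal subgroup is $V$ itself: $d(V)=d$, and since $d\le d\log p=\log r$ this already meets the bound (with equality when $p=2$, so the theorem is sharp). The remaining work is to bound $d(G_{0})$ for an irreducible linear group of dimension $d$; here $V$ is an irreducible $G_{0}$-module, so $d(G)\le d(G_{0})+1$, and I would combine this with an inductive estimate $d(G_{0})\le d\log p-1$ coming from the action of $G_{0}$ on its own chief factors and the elementary bound on subgroup-chain lengths in $\mathrm{GL}(d,p)$. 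Precisely here the single exception surfaces: for $d=1$, $p=3$ we have $G_{0}=C_{2}$ and $G\cong S_{3}$, whence $d(G)=2>\lfloor\log 3\rfloor=1$, which is the stated $r=3$ exception.

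For the almost simple case $T\le G\le\Aut(T)$ the subnormal subgroups are $1$, $T$ and the normal subgroups between $T$ and $G$; each is generated by at most three elements (using $d(T)=2$ and $d_{G}(T)=1$), so the bound holds once $\lfloor\log r\rfloor\ge 3$, i.e. $r\ge 8$, with the degrees $5,6,7$ checked directly. The heart of the proof is then the diagonal, product and twisted-wreath types, where $H$ involves a direct power $T^{\ell}$ of a nonabelian simple group. Here I would invoke the precise count of generators of direct powers: $T^{\ell}$ is $n$-generated precisely when $\ell$ does not exceed the number of $\Aut(T)$-orbits on generating $n$-tuples of $T$, a quantity of order $|T|^{n-1}$, so that $d(T^{\ell})\approx 1+\log_{|T|}\ell$. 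In product action I would combine this with Theorem \ref{TransTheoremCor1} applied to the transitive top group $P\le S_{k}$ permuting the $k$ coordinates, giving $d(H)\lesssim\log_{|T|}\ell+c_{1}k/\sqrt{\log k}$; since $\log r$ is of order $k\log m\ge k\log 5$ (product), $(k-1)\log|T|$ (diagonal) or $k\log|T|$ (twisted wreath), all of these contributions are comfortably absorbed.

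I expect the genuine difficulty to lie not in the asymptotics but in pinning down the \emph{exact} constant $\lfloor\log r\rfloor$ and isolating the unique exception. Obtaining $\lfloor\log r\rfloor$ rather than merely $O(\log r)$ forces a tight comparison exactly in the cases of small $d$, $p$, $k$ and $\ell$, where the floor function and the gap between $\log_{2}$ and $\log_{p}$ matter; in particular the crude estimate $d(T^{\ell})\le d(T)+\lceil\log_{|T|}\ell\rceil$ must be sharpened using the true number of $\Aut(T)$-orbits of generating tuples, so as not to lose the single unit separating $\lfloor\log r\rfloor$ from $\lfloor\log r\rfloor+1$. I would therefore expect the bulk of the careful argument to be the inductive bound on $d(G_{0})$ in the affine case and the direct verification of the finitely many small primitive groups sitting near the boundary, these being where the $S_{3}$ exception and the sharp instances live.
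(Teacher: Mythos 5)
The first thing to say is that the paper contains no proof of this statement: Theorem \ref{derekthm} is imported verbatim from Holt and Roney-Dougal \cite{derek}, where it is Theorem 1.1, so your attempt can only be judged on its own merits (your O'Nan--Scott skeleton, with generation of powers $T^{\ell}$ counted via $\Aut(T)$-orbits on generating tuples, does broadly match the strategy of \cite{derek}). Judged that way, it has two genuine gaps. In the affine case you bound $d(V)$, $d(G_{0})$ and $d(G)$ only, but the theorem is about \emph{all} subnormal subgroups of $G=V\rtimes G_{0}$: such a subgroup $H$ can meet $V$ in a proper submodule and project onto a subnormal --- hence completely reducible, but in general far from irreducible --- subgroup of $G_{0}$ (for instance $H=V\rtimes N_{0}$ with $N_{0}\trianglelefteq G_{0}$; if $V$ restricted to $N_{0}$ is a sum of isomorphic modules, then $d_{H}(V)$ is large), and the assertion that $V$ is the ``extremal'' subnormal subgroup is precisely what needs proof; these intermediate subgroups are where the bound is tight. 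Moreover, your proposed engine for $d(G_{0})\le d\log p-1$, namely ``the elementary bound on subgroup-chain lengths in $\mathrm{GL}(d,p)$'', cannot work: the chain length of $\mathrm{GL}(d,p)$ is at least $d(d-1)/2$, since the upper unitriangular subgroup is a $p$-group of order $p^{d(d-1)/2}$, so chain-length bounds are quadratic in $d$ while your target is linear in $d$. A bound of the required strength for (subnormal subgroups of) irreducible linear groups is exactly the hard companion result on completely reducible matrix groups proved in \cite{derek} via CFSG; your outline silently assumes the hardest part of the theorem being cited.

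The second gap is in the nonabelian-socle cases. There you apply Theorem \ref{TransTheoremCor1} to the group induced by $H$ on the $k$ coordinates, but the image in $S_{k}$ of a \emph{subnormal} subgroup $H$ need not be transitive, so that theorem does not apply to it; one must control an intransitive group orbit by orbit, and the floor $\lfloor\log r\rfloor$ leaves no slack for the naive sum over orbits. Worse, Theorem \ref{TransTheoremCor1} is \cite[Corollary 1.2]{GT}, and the arguments of \cite{GT} themselves rest on the results of \cite{derek} --- including the very statement you are proving, which is the standard input there for handling primitive components --- so this appeal is circular; any proof along your lines must replace it with a self-contained estimate for subgroups of $S_{k}$. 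In short, your asymptotic picture is plausible and correctly locates the delicate cases (small parameters, the $S_{3}$ exception), but the two key quantitative ingredients, the completely reducible linear bound and the top-group bound, are assumed rather than proved.
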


In order to prove Theorem \ref{LimitTheorem}, we will also need to improve the upper bound for $d(G)$ when $G$ is imprimitive. Before stating the results we need, we introduce a definition: for a finite group $R$, and a transitive permutation group $S\le \Sym{(s)}$, consider the wreath product $W=R\wr S$. Let $B:=R_{1}\times R_{2}\times\hdots\times R_{s}$ be the base group of $W$, and let $\pi:W\rightarrow \Sym{(s)}$ be the projection onto the top group. Also, since $N_{W}(R_{i})\cong R_{i}\times (R\wr \Stab_{s}(i))$, we may consider the projection maps $\rho_{i}:N_{W}(R_{i})\rightarrow R_{i}$. Then say that a subgroup $G$ of $W$ is \emph{large} if
\begin{enumerate}[(1)]
\item $\pi(G)=S$, and;
\item $\rho_{i}(N_{G}(R_{i}))=R_{i}$ for all $i$.\end{enumerate}

The results we need, both from \cite{GT}, can now be stated as follows (here, $(t)_{p}$ denotes the $p$-part of the positive integer $t$):
\begin{Lemma}[{\bf\cite{GT}, Lemma 4.1 part (ii)}] \label{pmodlemma} Let $G$ be a finite group, $H$ a subgroup of $G$ of index $n\ge 2$, $F$ a field of characteristic $p>0$, and $V$ a $H$-module of dimension $a$ over $F$. Also, let $T$ be a soluble subgroup of $G$, and let $t_{i}$, for $1\le i\le m$, denote the lengths of the orbits of $T$ on the set of right cosets of $H$ in $G$. Let $S$ be a submodule of the induced module $V\uparrow^{G}$. Then $d_{G}(S)\le a\sum_{i=1}^{m} (t_{i})_{p}$. 
\end{Lemma}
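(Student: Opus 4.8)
The plan is to strip $G$ down to the soluble subgroup $T$, apply Mackey's decomposition, and then prove a self-contained estimate on submodules of a single induced module by induction on $|T|$, using solubility. Throughout, write $d_K(Y)$ for the minimal number of generators of an $FK$-module $Y$, and recall two elementary facts: generators over a subgroup are generators over the whole group, i.e. $d_L(Y)\le d_K(Y\downarrow_K)$ whenever $K\le L$ (a generating set for $Y\downarrow_K$ over $FK$ generates $Y$ over $FL$, since $FK\subseteq FL$); and $d$ is subadditive on short exact sequences, so along any filtration $d_K(Y)\le\sum_j d_K(\text{factor}_j)$.

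First I would restrict to $T$, giving $d_G(S)\le d_T(S\downarrow_T)$, and then apply Mackey's formula
\[
(V\uparrow^G)\downarrow_T\cong\bigoplus_{i=1}^m W_i,\qquad W_i\cong\bigl(V^{g_i}\downarrow_{T\cap H^{g_i}}\bigr)\uparrow^T,
\]
with one summand $W_i$ per $T$-orbit on $H\backslash G$, where $t_i=[T:T\cap H^{g_i}]$ is the orbit length and $\dim_F W_i=at_i$. Intersecting $S\downarrow_T$ with the partial sums $W_1\oplus\dots\oplus W_k$ produces a filtration of $S\downarrow_T$ whose $k$-th factor embeds in $W_k$; subadditivity then yields $d_T(S\downarrow_T)\le\sum_i d_T(Z_i)$ for suitable $T$-submodules $Z_i\le W_i$. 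Thus the whole statement reduces to the following estimate, which then sums to $a\sum_i(t_i)_p$: for a subgroup $J\le T$ of index $t$, an $FJ$-module $V'$ with $\dim_F V'=a$, and any $T$-submodule $Z\le V'\uparrow^T$, one has $d_T(Z)\le a\,(t)_p$.

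I would prove this reduced estimate by induction on $|T|$. If $t$ is a power of $p$ it is immediate, since $d_T(Z)\le\dim_F Z\le\dim_F(V'\uparrow^T)=at=a(t)_p$. Otherwise, solubility of $T$ supplies a normal subgroup $N\trianglelefteq T$ of prime index $q$. If $J\not\subseteq N$ then $JN=T$, so $(V'\uparrow^T)\downarrow_N$ is the \emph{single} induced module $(V'\downarrow_{N\cap J})\uparrow^N$ of index $[N:N\cap J]=t$ over $N$, and the inductive hypothesis over $N$ gives the bound at once. If $J\subseteq N$ then $(V'\uparrow^T)\downarrow_N$ is the direct sum of the $q$ conjugates of $V'\uparrow^N$, cyclically permuted by $T/N$. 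When $q=p$, restriction to $N$ plus subadditivity suffices: each of the $p$ conjugate summands contributes, by induction, at most $a(t)_p/p$ (the $p$-part of the index over $N$ has dropped by exactly $p$), and the factor $p$ is absorbed.

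The genuinely hard case, and the step I expect to require the most care, is $J\subseteq N$ with $q\neq p$: now the $p$-part of the index does not drop, so a naive restriction to $N$ overcounts by a factor of $q$. The remedy exploits that $[T:N]=q$ is coprime to $p$, so that $FT$ is relatively $FN$-semisimple and $J(FT)=J(FN)\cdot FT$; hence $\mathrm{rad}_T(Z)=\mathrm{rad}_N(Z\downarrow_N)$ and the $T$- and $N$-heads of $Z$ coincide as vector spaces. Clifford theory then organises this common head: over each simple $FT$-constituent the conjugate $N$-constituents occur with equal multiplicity and are \emph{fused} into a single $T$-constituent, so that the relevant per-simple multiplicity ratio over $T$ is bounded by $\max\{d_N(X'):X'\le V'\uparrow^N\}$ with \emph{no} extra factor of $q$. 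Feeding in the inductive bound $a(t)_p$ for submodules of $V'\uparrow^N$ — whose index has the same $p$-part since $q\neq p$ — then closes the induction. Making this fusion precise, i.e. proving that inducing with index coprime to $p$ does not inflate the submodule-generation number, is exactly where the solubility of $T$ and the coprime-index representation theory do the real work, and is the crux of the argument.
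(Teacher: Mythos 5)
A preliminary remark: the paper never proves this lemma --- it is imported verbatim from \cite{GT} (Lemma 4.1(ii) there), so there is no internal proof to compare against and your argument must stand on its own. Its skeleton is sound: the restriction $d_G(S)\le d_T(S\downarrow_T)$, the Mackey decomposition with one summand of dimension $at_i$ per $T$-orbit, the filtration/subadditivity step, the reduction to the single-orbit claim $d_T(Z)\le a(t)_p$ for $Z\le V'\uparrow^T_J$, the base case, the case $JN=T$, and the case $J\le N$, $q=p$ are all correct. The genuine gap is exactly where you located it, and the mechanism you sketch there cannot close it. The radical identity $J(FT)=J(FN)\cdot FT$ is fine, so the $T$-head of $Z$ equals the $N$-head of $Z\downarrow_N$; but ``fusion'' happens only for $FN$-simples $Y$ that are \emph{not} $T$-invariant: there $X=Y\uparrow^T$ is simple and its multiplicity in $FT/J(FT)$ is $q$ times that of $Y$ in $FN/J(FN)$, which is what absorbs the factor $q$. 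Clifford theory has a second branch that your accounting ignores: if $Y$ \emph{is} $T$-invariant it does not fuse; instead (say over a splitting field) it spreads into $q$ pairwise non-isomorphic extensions $X_1,\dots,X_q$, the twists of one extension by the characters of $T/N$, each with the \emph{same} multiplicity in $FT/J(FT)$ as $Y$ has in $FN/J(FN)$. Your inductive data only bounds $\mathrm{mult}_Y(\mathrm{head}_N(Z\downarrow_N))=\sum_j \mathrm{mult}_{X_j}(\mathrm{head}_T Z)$ by $q\,a(t)_p\,m_Y$, and this is consistent with the head of $Z$ being concentrated in a single $X_j$ with multiplicity $q\,a(t)_p\,m_Y$ --- a factor $q$ too large. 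This case is not exotic: for $T=C_p\rtimes C_q$, $N=C_p$, $J=1$, $V'=F$ trivial, \emph{every} $FN$-simple is $T$-invariant and every $FT$-simple is a twist of the trivial module, so your fusion step applies to nothing and your method only yields $q\,(t)_p$, not $(t)_p$.

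What is missing is an equidistribution input, and no amount of head-counting on $Z$ over $N$ can supply it, because a submodule $Z$ need not be twist-invariant. One correct repair: bound composition-factor multiplicities of the ambient module $M=V'\uparrow^T_J$ instead of head multiplicities of $Z$. These dominate what you need, since $\mathrm{mult}_X(\mathrm{head}_T Z)\le \mathrm{mult}_X(Z)\le \mathrm{mult}_X(M)$, they are additive (so your three-case induction runs with no loss), and --- crucially --- they see the symmetry of $M$: for every character $\lambda$ of $T/N$ one has $M\otimes\lambda\cong (V'\otimes\lambda\downarrow_J)\uparrow^T=M$, precisely because $J\le N=\ker\lambda$, so the $q$ extensions $X_j=X_1\otimes\lambda_j$ of an invariant $Y$ occur in $M$ with \emph{equal} multiplicity. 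Then $\mathrm{mult}_{X_j}(M)=\tfrac1q\sum_i \mathrm{mult}_{X_i}(M)\le \tfrac1q\,\mathrm{mult}_Y(M\downarrow_N)\le a(t)_p\,m_Y=a(t)_p\,m_{X_j}$, which kills the factor $q$ in the invariant branch (the non-invariant branch is handled by fusion as you intended), and Nakayama then gives $d_T(Z)\le a(t)_p$ for every submodule $Z\le M$. So your plan is repairable, but the decisive step requires replacing your crux argument (heads of $Z$, restriction to $N$, fusion) by a different invariant plus a symmetry argument that the proposal does not contain; as written, it has a real gap at its central point.
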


Note that, for a finite group $R$, $a(R)$ denotes the composition length of $R$.
\begin{Proposition}[{\bf\cite{GT}, Corollary 5.6}]\label{pq1} Let $R$ be a finite group, let $S$ be a transitive permutation group of degree $s\ge 2$, and let $G$ be a large subgroup in the wreath product $R\wr S$. Then\begin{enumerate}[(i)]
\item If $2\le s\le 1260$, then $d(G)\le \left\lfloor \dfrac{\ws{c}a(R)s}{\log{s}}\right\rfloor +d(\pi(G))$, where $\ws{c}:=2\times 1.25506/\ln{2}=3.621337\hdots$;
\item If $s\ge 1261$, then $d(G)\le \left\lfloor \dfrac{a(R)b_{1}s}{\sqrt{\log{s}}}\right\rfloor +d(\pi(G))$, where $b_{1}:=2/\sqrt{\pi}=1.2838\hdots$.
\end{enumerate}\end{Proposition}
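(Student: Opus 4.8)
The plan is to strip off the top group and reduce to a relative-generation estimate for the base. Writing $B=R_{1}\times\cdots\times R_{s}$ for the base group of $R\wr S$, we have $G\cap B\trianglelefteq G$ with $G/(G\cap B)\cong\pi(G)=S$, so it suffices to control how many elements of $G\cap B$ are needed to complete a generating set obtained from lifts of a minimal generating set of $\pi(G)$. To do this I would fix a chief series of $R$ and use it to refine $G\cap B$ into a $G$-chief series; since $R$ has composition length $a(R)=\ab+\nab$ and $G$ permutes the $s$ coordinates transitively, every $G$-chief factor arising in this way is of induced type over the $s$ coordinates, so that, up to the coordinate-induction, there are $a(R)$ factors to account for. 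The contribution of the abelian factors is governed by module generation, which behaves additively along the series, while the non-abelian factors must be treated separately, since there normal-closure generation alone undercounts, as the case $G=\Alt(5)$ already shows.

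For an abelian $G$-chief factor, of characteristic $p$ say, I would apply Lemma \ref{pmodlemma} with $H:=N_{G}(R_{1})$, a subgroup of index $s$ whose cosets are identified with the $s$ coordinates, on which $G$ acts through $\pi(G)=S$. Largeness condition (2) ensures that $\rho_{1}\colon N_{G}(R_{1})\to R_{1}$ is onto, so the corresponding chief factor of $R_{1}$ becomes an $H$-module $V$, and the $G$-chief factor under consideration is then a submodule of the induced module $V\uparrow^{G}$. Choosing a soluble $T\le G$ and invoking Lemma \ref{pmodlemma} bounds the number of generators this factor contributes by $\dim V\cdot\sum_{i}(t_{i})_{p}$, where the $t_{i}$ are the lengths of the orbits of $T$ on the $s$ coordinates. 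Summing over the $\ab$ abelian factors yields a bound of the shape $\sum_{p}m_{p}\sum_{i}(t_{i})_{p}$, where $m_{p}$ counts the abelian composition factors of characteristic $p$.

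The non-abelian factors are comparatively cheap. Each is a direct product of copies of a fixed non-abelian simple group, with $G$ permuting the copies transitively, and largeness condition (2) again forces the relevant normalizer to surject onto a single copy; a separate, standard estimate then bounds the contribution of each such factor by a constant, for a total of $O(\nab)$ that is absorbed into the floor in the final bound. After this step the entire problem collapses onto the two quantities $d(\pi(G))$ and the abelian orbit sum above.

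The main obstacle is the orbit estimate together with its number-theoretic consummation, and this is where the argument essentially reruns the proof of Theorem \ref{TransTheoremCor1} with the extra factor $a(R)$ in tow. The task is to choose the soluble $T$, conveniently drawn from a Sylow or Hall system of $G$, so that its orbits on the $s$ coordinates have $p$-parts as small as the structure of $S$ permits, and then to bound $\sum_{i}(t_{i})_{p}$ uniformly in $p$. The extremal analysis of this sum is a Gaussian (binomial-tail) estimate, which for large $s$ produces the per-factor bound $b_{1}s/\sqrt{\log s}$ with $b_{1}=2/\sqrt{\pi}$; for small $s$ the cruder but explicit Rosser--Schoenfeld prime-counting bound $\pi(x)<1.25506\,x/\ln x$ is used instead, producing $\ws{c}s/\log s$ with $\ws{c}=2\times1.25506/\ln2$. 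Summing these per-factor bounds over the $a(R)$ chief factors and adding $d(\pi(G))$ gives the two displayed inequalities, the threshold $s=1260$ marking the degree beyond which the asymptotic Gaussian estimate overtakes the explicit prime-counting one. Verifying the sharpness of the orbit bound, and in particular recovering the $1/\sqrt{\log s}$ saving uniformly over all transitive $S$ and all primes $p$, is the genuinely hard part; the group-theoretic reductions above are essentially formal once Lemma \ref{pmodlemma} is available.
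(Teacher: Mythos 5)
This statement is not proved in the paper at all: it is quoted verbatim from \cite{GT} (Corollary 5.6), so there is no internal proof to compare against. Your structural reductions do, however, match the style of argument the paper itself uses in the analogous place (the proof of Lemma \ref{MainLemma}): refine $G\cap B$ along a chief series of $R$, identify each abelian factor with a submodule of a module induced over the $s$ coordinates via largeness and Alperin's criterion, attack it with Lemma \ref{pmodlemma}, and dispose of each nonabelian factor at a cost of $1$ generator. Up to that point your sketch is sound.

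The genuine gap is in the step you yourself flag as ``the genuinely hard part'', and it is worse than hard: the mechanism you propose provably cannot deliver the stated bound. You claim that for every transitive $S$ of degree $s$ and every prime $p$ one can choose a soluble $T\le G$ whose orbit lengths $t_{i}$ on the $s$ coordinates satisfy $\sum_{i}(t_{i})_{p}=O(s/\sqrt{\log s})$. Take $S$ to be a transitive $2$-group of degree $s=2^{k}$ and $p=2$. The action of $G$ on the cosets of $H=N_{G}(R_{1})$ factors through $\pi(G)=S$, so the orbits of any $T\le G$ are the orbits of the $2$-group $\pi(T)$, and all of their lengths are powers of $2$; hence $\sum_{i}(t_{i})_{2}=\sum_{i}t_{i}=s$ for \emph{every} choice of $T$, soluble or not. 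Lemma \ref{pmodlemma} therefore gives nothing better than $a\cdot s$ per factor here, and the $1/\sqrt{\log s}$ saving cannot be extracted from orbit $p$-parts. This case is not peripheral: it is exactly the extremal configuration for the result. The Gaussian/binomial behaviour you correctly sense (and the constant $2/\sqrt{\pi}$) enters not through orbit lengths but through the submodule structure of $\mathbb{F}_{2}$-permutation modules of transitive $2$-groups: for instance, for $S=C_{2}^{k}$ acting regularly, the submodule of $\mathbb{F}_{2}[S]\cong\mathbb{F}_{2}[x_{1},\dots,x_{k}]/(x_{i}^{2})$ spanned by the monomials of degree at least $\lfloor k/2\rfloor$ needs $\binom{k}{\lfloor k/2\rfloor}\approx s\sqrt{2/(\pi\log s)}$ generators, the binomial coefficient arising as the dimension of a radical layer. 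Bounding generator numbers of such submodules directly, rather than via a choice of soluble subgroup, is the hard content of \cite{GT}. Note finally that the paper's own use of the orbit-sum lemma (Proposition \ref{2aProp}) is deliberately confined to the case $U\in\{\Alt(u),\Sym(u)\}$, precisely because there one has soluble subgroups with at most two orbits, each of $p'$-length (Proposition \ref{AltSolTrans}); for a general transitive top group no such subgroup need exist, and your argument collapses at exactly that point.
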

In order to use Proposition \ref{pq1}, we will also need an upper bound on the composition length of a primitive group, in terms of its degree. The bound we require is provided by the next theorem, which is stated slightly differently from how it is stated in \cite{Pyb}.

\begin{Theorem}[{\bf\cite{Pyb}, Theorem 2.10}]\label{pyber} Let $R$ be a primitive permutation group of degree $r\ge 2$, and set $c_{0}:=\log_{9}{48}+\frac{1}{3}\log_{9}{24}=2.24399\hdots$. Then $a{(R)}\le (2+c_{0})\log{r}-(1/3)\log{24}$.\end{Theorem}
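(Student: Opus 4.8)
My approach would be an induction on the degree $r$, structured by the O'Nan--Scott classification of primitive groups and resting on two elementary facts about composition length: it is additive along a normal series, so that $a(R)=a(\Soc R)+a(R/\Soc R)$, and $a(G)\le\log|G|$ for every finite group $G$ (each composition factor has order at least $2$), which converts the order bounds available for primitive and linear groups into composition-length bounds. The whole difficulty is concentrated in the affine case, so I would settle that first and then argue that the other types are dominated by it.

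In the affine case $\Soc R=V$ is elementary abelian of order $r=p^{d}$, the stabiliser $H:=R_{0}\le GL(d,p)=GL(V)$ is irreducible (hence completely reducible), and $a(R)=d+a(H)$ with $d\le\log r$. The constants $48,24$ and the base $9$ enter through Pálfy's theorem: a completely reducible solvable subgroup $K\le GL(V)$ satisfies $|K|\le 24^{-1/3}r^{c_{0}}$, with equality for $GL(2,3)$ acting on $\mathbb{F}_{3}^{2}$, where $|V|=9$ and $|K|=48$. Splitting $a(H)$ into abelian and nonabelian composition factors, I would bound the abelian count by applying Pálfy to the solvable radical, giving a contribution $c_{0}\log r-\tfrac13\log 24$ up to a bounded Schreier term from outer automorphisms, and bound the number of nonabelian factors of an irreducible subgroup of $GL(d,p)$ by $d\log p=\log r$. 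Thus the socle contributes $d\le\log r$, the nonabelian part of $H$ contributes $\le\log r$, and the abelian part contributes $\le c_{0}\log r-\tfrac13\log 24$; summing yields exactly $(2+c_{0})\log r-\tfrac13\log 24$, which also explains why the coefficient is $2+c_{0}$ rather than $1+c_{0}$.

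The remaining O'Nan--Scott types are of lower order. For almost simple $R$, the quantity $a(R)=1+a(R/\Soc R)\le 1+a(\Out(\Soc R))$ is absolute-plus-$\log\log$ and is dwarfed by $\log r$; for diagonal type with socle $T^{k}$ the socle contributes $k$ against $\log r=(k-1)\log|T|\ge(k-1)\log 60$; and for product-action and twisted-wreath type $R\le R_{0}\wr\Sym(m)$ with $r=r_{0}^{m}$, additivity together with the induction hypothesis on $R_{0}$ and the linear-in-$m$ composition length of the transitive top group give $a(R)\le(2+c_{0})m\log r_{0}+O(m)=(2+c_{0})\log r+O(m)$, with the $O(m)$ absorbed because $\log r_{0}\ge\log 5$. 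The main obstacle is not the case division but the constant bookkeeping: one must check that the affine estimate, including the negative $-\tfrac13\log 24$ slack, is genuinely reproduced rather than eroded when the top-group and outer-automorphism contributions are folded in at each inductive step, and one must verify by hand the finitely many small-degree configurations where $r$ is too small for the asymptotic order bounds to take hold.
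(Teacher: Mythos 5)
The paper offers no proof of this theorem at all: it is quoted, in restated form, from Pyber's survey (\cite{Pyb}, Theorem 2.10), so your proposal can only be judged as a reconstruction of Pyber's argument. Its broad architecture is indeed the right one --- an O'Nan--Scott reduction in which the affine case carries the constant, with $c_{0}$ and the $-\frac{1}{3}\log{24}$ term coming from the P\'alfy--Wolf bound on solvable completely reducible linear groups, and with $a(R)$ split as socle plus nonabelian part plus abelian part of the point stabiliser, giving $\log{r}+\log{r}+c_{0}\log{r}-\frac{1}{3}\log{24}$.

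However, the crucial step is genuinely gapped. You cannot bound the abelian composition length of an irreducible $H\le GL(d,p)$ by ``applying P\'alfy to the solvable radical, up to a bounded Schreier term'': abelian composition factors need not lie in, or within a bounded distance of, the solvable radical, and the discrepancy is unbounded. For instance, take $H=\Alt(5)\wr P$ with $P$ an iterated wreath product of copies of $C_{2}$, transitive of degree $k$ with $a(P)=k-1$; then $H$ has \emph{trivial} solvable radical yet $k-1$ abelian composition factors, and for suitable $p$ (e.g.\ $p=11$, using the three-dimensional representation of $\Alt(5)$ summed over the $k$ blocks) $H$ embeds irreducibly in $GL(3k,p)$. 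The statement you actually need --- that for a completely reducible linear group on $V$ the product of the orders of the abelian composition factors is at most $24^{-1/3}|V|^{c_{0}}$ --- is true, but it is the real content of Pyber's proof and requires its own induction over the structure of linear groups (reducible, imprimitive, tensor-decomposable and primitive cases, the last via the generalised Fitting subgroup and the Schreier conjecture); it does not follow in one line from the solvable case. Your parallel claim that the number of nonabelian composition factors is at most $d\log{p}$ is likewise asserted rather than proved. Finally, your own worry about erosion of the $-\frac{1}{3}\log{24}$ slack is well founded: in the product-action case, combining the inductive bound $(2+c_{0})\log{r_{0}}-\frac{1}{3}\log{24}$ for the component with the Cameron--Solomon--Turull bound $\frac{3}{2}m$ for the top group forces $\left(\frac{1}{3}\log{24}-\frac{3}{2}\right)m\ge \frac{1}{3}\log{24}$, and since $\frac{1}{3}\log{24}-\frac{3}{2}\approx 0.028$ this fails for all $m\le 53$; to close the induction there one must instead use that product-action components are almost simple or of diagonal type, whose composition length is far below the generic affine bound.
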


Finally, we need the following theorem of Cameron, Solomon and Turull; note that we only give a simplified version of their result here. 
\begin{Theorem}[{\bf\cite{Cam}, Theorem 1}]\label{TransCompLength} Let $G$ be a permutation group of degree $n\ge 2$. Then $a{(R)}\le \frac{3}{2}n$.\end{Theorem}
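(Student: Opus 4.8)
The plan is to prove, by strong induction on $n$, the slightly stronger statement that if $G$ has degree $n$ and exactly $t$ orbits then $a(G)\le \tfrac32(n-t)$; in particular a transitive $G$ satisfies $a(G)\le\tfrac32(n-1)$, and the asserted bound $a(G)\le\tfrac32 n$ follows at once. The two facts I plan to use throughout are that composition length is additive on the factors of a normal subgroup ($a(G)=a(N)+a(G/N)$ for $N\trianglelefteq G$) and monotone under passing to subgroups ($a(H)\le a(G)$ for $H\le G$, seen by intersecting a composition series of $G$ with $H$). Combining these, if $G$ has orbits $\Omega_1,\dots,\Omega_t$ and $G^{(i)}\le\Sym(\Omega_i)$ denotes the transitive constituent on $\Omega_i$, then $G\hookrightarrow G^{(1)}\times\dots\times G^{(t)}$ gives $a(G)\le\sum_i a(G^{(i)})$. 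Thus it suffices to treat transitive $G$, and, crucially, the intransitive form of the hypothesis (applied to constituents of strictly smaller degree) is exactly what will feed the inductive step.

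For the base of the induction I dispose of the primitive groups. If $G$ is primitive of degree $r\ge 2$, then Theorem \ref{pyber} gives $a(G)\le (2+c_0)\log r-\tfrac13\log 24$, and since the right-hand side grows only logarithmically this is at most $\tfrac32(r-1)$ for all $r\ge 9$; the finitely many primitive groups of degree $2\le r\le 8$ have composition length well below $\tfrac32(r-1)$ and are checked directly. This settles the transitive primitive case.

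Now suppose $G$ is transitive and imprimitive of degree $n$, and fix any nontrivial block system $\Sigma$ with $m$ blocks of size $b$, so that $2\le b,m<n$ and $n=mb$. Let $K=G^{\Sigma}\le\Sym(m)$ be the induced action on the blocks and $N=\ker(G\to K)$, so that $a(G)=a(N)+a(K)$. Since $K$ is transitive of degree $m<n$, induction gives $a(K)\le\tfrac32(m-1)$. The subgroup $N$ fixes every block setwise, so each of the $m$ blocks is a nonempty union of $N$-orbits while no $N$-orbit crosses a block; hence $N$, as a permutation group of degree $n$, has at least $m$ orbits, each of size at most $b<n$. Applying the intransitive form of the induction hypothesis to the (smaller-degree) constituents of $N$ yields $a(N)\le\tfrac32(n-q)\le\tfrac32(n-m)$, where $q\ge m$ is the number of $N$-orbits. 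Adding the two estimates,
\begin{equation*}
a(G)=a(N)+a(K)\le \tfrac32(n-m)+\tfrac32(m-1)=\tfrac32(n-1),
\end{equation*}
which is exactly the desired bound, completing the induction.

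The point worth flagging as the main obstacle is that the naive hypothesis $a(G)\le\tfrac32 n$ does \emph{not} survive the imprimitive step: embedding $G$ in a wreath product and bounding the base and top separately loses an additive term comparable to $a(K)$. The device that repairs this is to carry the orbit count in the inductive statement, so that the extra slack $-\tfrac32 t$ available for the intransitive kernel $N$ (which always has at least $m$ orbits) is precisely enough to absorb the contribution $a(K)\le\tfrac32(m-1)$ of the top group. The only external input is the logarithmic bound of Theorem \ref{pyber} for primitive groups, together with a routine finite check in small degree; everything else rests on the additivity and monotonicity of composition length.
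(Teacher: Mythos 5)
Your overall plan---strengthen the statement to $a(G)\le\tfrac{3}{2}(n-t)$ for a group with $t$ orbits, induct on the degree, use Theorem \ref{pyber} plus a finite check as the primitive base case, and split an imprimitive transitive group over its block kernel---is sound, and in fact the paper offers no proof to compare against: it simply quotes this bound from Cameron, Solomon and Turull. However, as written your argument rests on a false lemma. Composition length is \emph{not} monotone under passing to subgroups: $\Alt(4)\le \Alt(5)$, yet $a(\Alt(4))=3>1=a(\Alt(5))$. Your proposed justification (intersect a composition series of $G$ with $H$) only shows that each factor $(H\cap G_i)/(H\cap G_{i+1})$ embeds in the simple group $G_i/G_{i+1}$; such factors need not be simple or trivial, and refining them can make the series for $H$ strictly longer than that of $G$. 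This false monotonicity is invoked at both places where you pass from a group to its transitive constituents: in the opening reduction $a(G)\le\sum_i a(G^{(i)})$, and, critically, in the imprimitive step, where you bound $a(N)$ for the block kernel $N$. Note also that the induction hypothesis cannot be applied to $N$ itself, since $N$ has the same degree $n$ as $G$; applying it only to the constituents $N^{(j)}$ leaves you needing exactly the unjustified inequality $a(N)\le\sum_j a(N^{(j)})$.

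The gap is repairable while staying inside your framework: never compare a group with a subgroup, only with normal subgroups and quotients, where additivity $a(G)=a(M)+a(G/M)$ is legitimate. Concretely, if $H$ is any permutation group of degree $h$ with orbits $\Omega_1,\dots,\Omega_q$ and $q\ge 2$, let $M$ be the kernel of the action of $H$ on $\Omega_1$; then $a(H)=a(H^{(1)})+a(M)$, where $H^{(1)}$ is transitive of degree $|\Omega_1|<h$ and $M$ is faithful on the remaining $h-|\Omega_1|<h$ points with at least $q-1$ orbits, so the strengthened induction hypothesis applies to \emph{both} pieces and sums to $\tfrac{3}{2}(h-q)$. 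Running this with $H=N$ (degree $n$, at least $m$ orbits, but every piece of strictly smaller degree) gives $a(N)\le\tfrac{3}{2}(n-m)$ without ever citing monotonicity, and together with $a(K)\le\tfrac{3}{2}(m-1)$ the induction closes; the same peeling handles the general intransitive case. Your remaining ingredients (the Pyber base case with a check in degree at most $8$, and the orbit-counting strengthening) are fine. It is worth noting that monotonicity under subgroups \emph{is} true for subgroup chain length $\ell(\cdot)$, and that is the route of Cameron, Solomon and Turull: every composition series of $G\le\Sym(n)$ is a chain of subgroups of $\Sym(n)$, so $a(G)\le\ell(\Sym(n))=\lceil 3n/2\rceil-b(n)-1$, where $b(n)$ is the number of ones in the binary expansion of $n$. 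So your instinct is correct for $\ell$, but not for $a$.
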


\subsection{Orders of transitive permutation groups}
We now turn to bounds on the order of a transitive permutation group $G$, of degree $n$. First, we define a function $\bl$ on $G$: If $G$ is primitive, set $R_{1}:=G$ and $r_{1}:=n$. Otherwise, let $r_{1}\ge 2$ denote the size of a minimal block for $G$. Then $G$ is a large subgroup of the wreath product $R_{1}\wr S_{1}$, where $R_{1}$ is primitive of degree $r_{1}$, and $S_{1}$ is transitive of degree $s_{1}:=n/r_{1}$. We can also iterate this process: either $S_{1}$ is primitive, or $S_{1}$ is a large subgroup in a wreath product $R_{2}\wr S_{2}$, where $R_{2}$ is primitive of degree $r_{2}\ge 2$, and $S_{2}$ is transitive of degree $s_{2}:=s_{1}/r_{2}$. Continuing in this way, we see that $G$ is a subgroup in the iterated wreath product $R_{1}\wr R_{2}\wr \hdots\wr R_{t}$, where each $R_{i}$ is primitive of degree $r_{i}$ say, and $\prod_{i} r_{i}=n$. We shall call $W=R_{1}\wr R_{2}\wr \hdots\wr R_{t}$ a \emph{primitive decomposition} of $G$, and the groups $R_{i}$ will be called the \emph{primitive components} of $G$ associated to $W$. Furthermore, we will write $\pi_{i}$ to denote the projection $\pi_{i}:G\le (R_{1}\wr R_{2}\wr \hdots \wr R_{i})\wr (R_{i+1}\wr\hdots\wr R_{t})\rightarrow R_{i+1}\wr\hdots\wr R_{t}$ (for $1\le i \le t-1$).

For each $i$, set $d_i:=r_i$ if $R_i\ge \Alt(r_i)$, and $d_i:=1$ otherwise. Now set $d':=\max_{i}{d_{i}}$, and $d:=\max\left\{d',c_{2}\right\}$, where $c_{2}:=2^{\frac{\log{95040}}{11}}=2.83489\hdots$. Finally, we define $\bl_{W}(G):=d$.

Before proceeding to the main result of this subsection, we require the following theorem of Maroti:
\begin{Theorem}[{\bf\cite{Maroti}, Corollary 1.4}]\label{Maroti} Let $G$ be a primitive permutation group of degree $r$, not containing $\Alt(r)$. Then $|G|\le {c_{2}}^{r-1}$, where $c_{2}:=2^{\frac{\log{95040}}{11}}=2.83489\hdots$.\end{Theorem}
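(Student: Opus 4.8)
The plan is to prove the bound by first reducing, via the O'Nan--Scott theorem together with the classification of finite simple groups, to a manageable list of structural cases, and then to verify the numerical inequality $|G|\le c_2^{r-1}$ in each. The observation that should guide the whole argument is that the constant is chosen so that $c_2^{11}=2^{\log 95040}=95040=|M_{12}|$; since $M_{12}$ is primitive of degree $12$, this group satisfies the bound with \emph{equality} (as $r-1=11$), so $M_{12}$ must be the extremal example. The task of the proof is therefore to show that no primitive group not containing $\Alt(r)$ beats $M_{12}$ in the ratio $(\log|G|)/(r-1)$.

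The structural reduction should first yield a generic, near-elementary upper bound via a stabiliser-chain (base-size) argument: choosing points so that successive stabiliser indices are as large as possible, one obtains $|G|\le r\prod_{i=0}^{\lfloor\log r\rfloor-1}(r-2^{i})$, and hence $|G|<r^{1+\log r}$, valid for all primitive groups outside an explicit short list of exceptional families (the Mathieu groups in their natural actions, and the product-action groups whose socle is $\Alt(m)^{k}$ acting on $\binom{m}{\ell}^{k}$ points). I would then compare this generic bound with the target: writing both sides as powers of $2$, the inequality $r^{1+\log r}\le c_2^{r-1}$ amounts to $\log r+(\log r)^{2}\le (r-1)\log c_2$, and since the left-hand side is polylogarithmic in $r$ while the right-hand side is linear, it holds for all $r$ above a small explicit threshold. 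This disposes of all large-degree groups at once.

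It then remains to treat the finitely many cases not covered: the small-degree primitive groups below the threshold, the four Mathieu groups, and the product-action families. For the Mathieu groups one checks $|M_{11}|,|M_{12}|,|M_{23}|,|M_{24}|$ directly against $c_2^{r-1}$, confirming that $M_{12}$ alone attains equality; for the product-action groups one bounds $|G|\le (m!)^{k}k!$ with $r=\binom{m}{\ell}^{k}$ and reduces to the base case $\Sym(m)$ acting on $\ell$-subsets; and for the remaining small degrees one consults the classification of primitive groups of each such degree. I expect the main obstacle to be precisely this finite but delicate casework: one must ensure that the constant $c_2$ is never exceeded by any sporadic small-degree example or by any product-action configuration, which in practice requires either careful hand estimates or a computer-assisted check against tables of primitive groups, together with a clean verification that the extremal ratio is genuinely realised only by $M_{12}$.
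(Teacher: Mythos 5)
The paper does not prove this statement: Theorem \ref{Maroti} is imported verbatim from Mar\'oti's paper \cite{Maroti} (his Corollary 1.4), so there is no internal proof to compare your attempt against. Judged against the actual source, your outline is essentially a faithful reconstruction of Mar\'oti's own deduction. His Theorem 1.1 is exactly the trichotomy you describe: a primitive group $G$ of degree $r$ not containing $\Alt(r)$ either lies in a product-action wreath product $\Sym(m)\wr\Sym(k)$ containing $\Alt(m)^{k}$, acting on $\binom{m}{\ell}^{k}$ points, or is one of $M_{11},M_{12},M_{23},M_{24}$ in a $4$-transitive action, or satisfies $|G|\le r\prod_{i=0}^{\lfloor\log r\rfloor-1}(r-2^{i})$; Corollary 1.4 then follows by precisely the numerical comparisons you propose, with $M_{12}$ (degree $12$, order $95040=c_{2}^{11}$) as the unique case of equality, which is exactly why the constant $c_{2}$ has this value. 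So your plan is sound and is, in substance, the proof in the cited reference.

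Two caveats on your write-up. First, the generic bound is not obtained by a ``near-elementary'' stabiliser-chain argument: proving that every primitive group outside the exceptional families admits a base yielding the product bound is the main content of Mar\'oti's paper, resting on CFSG-based results (O'Nan--Scott together with earlier work on minimal degrees and base sizes of primitive groups). Your proposal assumes this structure theorem rather than proving it; that is legitimate if you cite it explicitly, but you should be clear that the remaining work is then only the numerical deduction. Second, a simplification: if you compare $c_{2}^{r-1}$ directly against the product $r\prod_{i=0}^{\lfloor\log r\rfloor-1}(r-2^{i})$ rather than weakening it to $r^{1+\log r}$, the inequality holds for every $r\ge 5$ (and for $r\le 4$ every primitive group contains $\Alt(r)$, so nothing is to be checked), which eliminates the residual small-degree table consultation; the threshold near $r=12$ that forces that casework in your version is an artifact of the weakening. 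The only genuinely separate verifications needed are then the four Mathieu groups and the product-action family, exactly as you indicate.
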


We can now prove the following:
\begin{Proposition}\label{OrderBound} Let $G$ be a transitive permutation group of degree $n$, let $W=R_{1}\wr\hdots\wr R_{t}$ be a primitive decomposition of $G$, where each $R_{i}$ is primitive of degree $r_{i}$. Also, let $d:=\bl_{W}(G)$ be as defined prior to Theorem \ref{Maroti}. Then $|G| \le d^n$.\end{Proposition}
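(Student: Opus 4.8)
The plan is to bound $|G|$ by the order of the ambient iterated wreath product and then reduce everything to a single inequality for each primitive component. Since $G$ is a subgroup of $W=R_{1}\wr\cdots\wr R_{t}$, we have $|G|\le|W|$, and the order of the iterated wreath product factorises as $|W|=\prod_{i=1}^{t}|R_{i}|^{s_{i}}$, where $s_{i}=\prod_{j>i}r_{j}=n/(r_{1}\cdots r_{i})$ is the number of copies of $R_{i}$ appearing in the base of the decomposition (so $R_{1}$ occurs with the largest exponent $n/r_{1}$ and $R_{t}$ with exponent $1$). Writing $m_{i}:=r_{1}\cdots r_{i}$, so that $m_{0}=1$ and $m_{t}=n$, the target $|G|\le d^{n}$ becomes the estimate $\prod_{i}|R_{i}|^{s_{i}}\le d^{n}$.

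The heart of the argument is the uniform per-component bound
\[
|R_{i}|\le d^{\,r_{i}-1}\qquad\text{for every }i .
\]
I would prove this by splitting into the two cases built into the definition of $d=\bl_{W}(G)$. If $R_{i}\not\ge\Alt(r_{i})$, then $R_{i}$ is primitive of degree $r_{i}$ not containing the alternating group, so Theorem \ref{Maroti} gives $|R_{i}|\le c_{2}^{\,r_{i}-1}\le d^{\,r_{i}-1}$, using $d\ge c_{2}$. If instead $R_{i}\ge\Alt(r_{i})$, then $d_{i}=r_{i}$ and hence $d\ge r_{i}$; since $R_{i}\le\Sym(r_{i})$ we have $|R_{i}|\le r_{i}!$, and it suffices to verify the elementary inequality $r!\le r^{\,r-1}$ for $r\ge 2$ (an easy induction, with equality at $r=2$), which yields $|R_{i}|\le r_{i}^{\,r_{i}-1}\le d^{\,r_{i}-1}$.

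With the per-component bound in hand, I would finish by a telescoping computation of the exponent. Substituting $|R_{i}|\le d^{\,r_{i}-1}$ gives $|W|\le d^{E}$ with $E=\sum_{i=1}^{t}(r_{i}-1)s_{i}$. Using $r_{i}s_{i}=n/m_{i-1}$ together with $s_{i}=n/m_{i}$, each summand equals $n/m_{i-1}-n/m_{i}$, so the sum collapses to $E=n/m_{0}-n/m_{t}=n-1$. Hence $|G|\le|W|\le d^{\,n-1}\le d^{n}$, since $d>1$; this in fact gives the slightly stronger bound $|G|\le d^{\,n-1}$.

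The main obstacle is getting the per-component exponent exactly right, so that the telescoping lands on $n-1$ rather than something larger. The crude estimate $|R_{i}|\le r_{i}!\le r_{i}^{\,r_{i}}\le d^{\,r_{i}}$ would instead give $E=\sum_{i}r_{i}s_{i}=n\sum_{i}1/m_{i-1}$, and since $\sum_{i}1/m_{i-1}$ can be as large as nearly $2$ (each $r_{i}\ge 2$), this only yields a bound of the shape $d^{2n}$. The sharper inequality $r!\le r^{\,r-1}$ (equivalently $(r-1)!\le r^{\,r-2}$) is precisely what is needed to save the missing factor, and matching it against Maroti's exponent $r_{i}-1$ is exactly what forces the two cases in the definition of $\bl_{W}(G)$ to fit together cleanly.
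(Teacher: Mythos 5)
Your proof is correct and takes essentially the same approach as the paper: both arguments rest on the identical per-component bounds ($|R_{i}|\le c_{2}^{r_{i}-1}$ from Theorem \ref{Maroti} when $R_{i}\not\ge\Alt(r_{i})$, and $r_{i}!\le r_{i}^{r_{i}-1}$ with $d\ge r_{i}$ otherwise), the only difference being that the paper organizes the exponent bookkeeping as an induction on $n$ (peeling off $R_{1}$ and applying the hypothesis to $\pi_{1}(G)$), whereas you unroll that induction into the closed-form telescoping sum $\sum_{i}(r_{i}-1)s_{i}=n-1$. As a minor bonus, your version makes explicit the slightly sharper bound $|G|\le d^{\,n-1}$, which the paper's induction also yields implicitly but does not state.
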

\begin{proof} Working by induction on $n$, the claim follows when $G$ is primitive, since either $d=n$; or $|G| \le {c_{2}}^n$ (by Theorem \ref{Maroti}). So assume that $G$ is imprimitive, and let $r:=r_{1}$, $R:=R_{1}$, $S:=\pi_{1}(G)$. Then $S$ is transitive of degree $s:=n/r$ (being a large subgroup of $R_{2}\wr\hdots\wr R_{t}$), and $G$ is a large subgroup of the wreath product $R\wr S$. Suppose first that $R \ge \Alt(r)$. Then $|R|\le r^{r-1} \le d^{r-1}$. Also, the inductive hypothesis implies that $|S|\le d^s$. Hence, $|G|\le d^{(r-1)s}d^s=d^{rs}$, as needed. So assume that $R$ is not the alternating or symmetric group of degree $r$. Then Theorem \ref{Maroti} implies that $|R| \le {c_{2}}^{r-1} \le d^{r-1}$. The claim now follows, as above, using the inductive hypothesis.\end{proof}

\section{Bounding $d(G)$ in terms of $n$ and $\bl_{W}(G)$}
\begin{Proposition}\label{AltSolTrans} Let $n$ be a positive integer. Then the alternating group $\Alt(n)$ contains a soluble transitive subgroup.\end{Proposition}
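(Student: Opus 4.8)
The plan is to split on the parity of $n$ and, in each case, to produce a soluble transitive subgroup of $\Sym(n)$ that can be pushed inside $\Alt(n)$. The main tool I would record first is an elementary reduction. Suppose $H\le\Sym(n)$ is soluble and transitive, and fix a point $\alpha$. Then $H\cap\Alt(n)$ is automatically soluble, being a subgroup of $H$; and I claim it is transitive as soon as either $H\le\Alt(n)$, or the point stabiliser $H_{\alpha}$ contains an odd permutation. Indeed, if $H\not\le\Alt(n)$ then $H_{0}:=H\cap\Alt(n)$ has index $2$ in $H$, and since $H_{0}\trianglelefteq H$ the group $H$ permutes the $H_{0}$-orbits transitively; hence $H_{0}$ is either transitive or has exactly two orbits interchanged by every odd element. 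The latter is excluded precisely when $H_{0}H_{\alpha}=H$, i.e.\ when $H_{\alpha}\not\le H_{0}$, which is the stated condition. Thus it suffices, for each $n$, to exhibit a soluble transitive $H$ satisfying one of these two alternatives.

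For $n$ odd I would simply take $H=\langle(1\,2\,\cdots\,n)\rangle$: an $n$-cycle with $n$ odd is an even permutation, so $H\le\Alt(n)$ is already soluble (cyclic) and transitive, and the first alternative holds. (For $n=1$ the trivial group serves.) For $n$ even, write $n=2m$ and take $H=C_{2}\wr C_{m}$ in its imprimitive action on the $2m$ points, split into $m$ blocks of size $2$, with the top group $C_{m}$ cycling the blocks regularly and each base factor $C_{2}$ swapping the two points of its block. This $H$ is transitive of degree $2m$, and soluble because it is an extension of the elementary abelian base group $C_{2}^{m}$ by the cyclic top group $C_{m}$.

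It remains, in the even case, to check the stabiliser condition. With $\alpha$ a point in the first block, the stabiliser $H_{\alpha}$ contains the base involution that swaps the two points of the second block: this lies in the base group, fixes $\alpha$, and is a single transposition, hence odd, provided $m\ge 2$, i.e.\ $n\ge 4$. By the reduction above, $H\cap\Alt(n)$ is then soluble and transitive, completing the even case for all $n\ge 4$. I expect the only genuine obstacle to be this even case: the odd case is immediate, whereas for even $n$ no regular cyclic subgroup lies in $\Alt(n)$ (an $n$-cycle with $n$ even is odd), so one is forced into a non-regular construction and must carry out the parity bookkeeping above. The degenerate degree $n=2$, where $\Alt(2)$ is trivial and hence contains no transitive subgroup, falls outside the construction and should be read with the convention that the statement is applied only for $n\ge 3$.
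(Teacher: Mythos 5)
Your proof is correct, and in the even case it takes a genuinely different route from the paper. For odd $n$ the two arguments coincide (an $n$-cycle is even, so the cyclic group it generates already lies in $\Alt(n)$). For even $n$ the paper writes $n=2^{k}r$ with $r$ odd, takes a Sylow $2$-subgroup $P$ of $\Alt(2^{k})$ together with an $r$-cycle $x$, and forms the imprimitive wreath product $P\wr\langle x\rangle$: since $P$ and $\langle x\rangle$ consist of even permutations, this group sits inside $\Alt(n)$ from the start, so the paper never needs any parity bookkeeping. You instead build the soluble transitive group $C_{2}\wr C_{n/2}\le \Sym(n)$, which is \emph{not} contained in $\Alt(n)$, and prove a reduction lemma: if a soluble transitive $H\le\Sym(n)$ has a point stabiliser containing an odd permutation, then $H\cap\Alt(n)$ is still transitive (and of course soluble). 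Your index-two orbit argument for this reduction is correct, as is the verification that a base transposition of another block lies in the stabiliser. What the paper's route buys is the absence of any intersection step; what your route buys is elementarity (no Sylow subgroups) and, more importantly, robustness: your construction works uniformly for all even $n\ge 4$, whereas the paper's construction degenerates when $n\equiv 2 \pmod 4$, since then $2^{k}=2$, a Sylow $2$-subgroup of $\Alt(2)$ is trivial, and $P\wr\langle x\rangle$ has two orbits of size $r$ rather than being transitive --- the claim ``$P$ is transitive'' needs $2^{k}\ge 4$. Your argument covers exactly those degrees. You are also right to flag $n=2$: $\Alt(2)$ is trivial and has no transitive subgroup, so the statement as phrased (for every positive integer $n$) admits this one genuine exception, which both your proof and the paper's must exclude by convention.
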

\begin{proof} If $n$ is odd, then the group generated by an $n$-cycle suffices, so assume that $n$ is even, and write $n=2^{k}r$, with $r$ odd. Let $P$ be a Sylow $2$-subgroup of $\Alt(2^{k})$, and let $x$ be an $r$-cycle in $\Alt(r)$. Then $P$ is transitive, and the wreath product $P\wr \langle x\rangle$ (in its imprimitive action) is a soluble transitive subgroup of $\Alt(n)$.\end{proof} 

\begin{Proposition}\label{2aProp} Let $H$ be a finite group with a subgroup $H_{1}$ of index $u\ge 2$, let $V$ be a $H_{1}$-module of dimension $a$ over a field $\mathbb{F}$ of characteristic $p>0$, and let $U\le \Sym{(u)}$ be the image of the induced action of $H$ on the set of right cosets of $H_{1}$. If $U\in \left\{\Alt{(u)},\Sym{(u)}\right\}$, then each submodule of the induced module $V\uparrow^H_{H_{1}}$ can be generated by $2a$ elements.\end{Proposition}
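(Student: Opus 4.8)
The plan is to apply Lemma \ref{pmodlemma}, taking the ambient group there to be $H$, its distinguished subgroup to be $H_{1}$ (of index $u$), the module to be $V$, and the soluble subgroup to be a group $T\le H$ that we construct. Writing $\pi:H\to U\le\Sym(u)$ for the permutation representation of $H$ on the right cosets of $H_{1}$, Lemma \ref{pmodlemma} gives $d_{H}(S)\le a\sum_{i}(t_{i})_{p}$ for every submodule $S$ of $V\uparrow^{H}_{H_{1}}$, where the $t_{i}$ are the lengths of the orbits of $T$ on the $u$ cosets. Since these orbits coincide with the orbits of the image $\pi(T)\le U$, it suffices to produce a soluble $T\le H$ whose image has orbit lengths with $p$-parts summing to at most $2$; the bound $d(S)\le 2a$ then follows at once.

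The guiding idea is to cover the $u$ points by at most two orbits of lengths coprime to $p$ (each then contributing a $p$-part equal to $1$), or by a single such orbit when possible. When a \emph{single} permutation $\sigma\in U$ with the desired cycle lengths exists, the lift is immediate: pick any $h\in H$ with $\pi(h)=\sigma$ and set $T:=\langle h\rangle$, which is cyclic, hence soluble, with $\pi(T)=\langle\sigma\rangle$, so that the orbits of $T$ are exactly the cycles of $\sigma$. For $U=\Sym(u)$ this always works: take $\sigma$ a $u$-cycle if $p\nmid u$, and a permutation of cycle type $(u-1,1)$ otherwise, where $u-1$ is coprime to $p$. For $U=\Alt(u)$ the same device handles $u$ odd with $p\nmid u$ (a $u$-cycle is even), and $u$ even of any $p$ (a permutation with two cycles of lengths $\ell_{1},\ell_{2}$ coprime to $p$ is even, since its sign is $(-1)^{u}=+1$, and such a split of an even $u$ always exists).

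The remaining case, $U=\Alt(u)$ with $u$ odd and $p$ an odd prime dividing $u$, is genuinely delicate. Here no single element of $\Alt(u)$ can have two orbits of lengths coprime to $p$: a permutation with an even number of cycles is odd when $u$ is odd, while an odd number of coprime cycles forces the $p$-part sum to be at least $3$. A cyclic $T$ therefore cannot succeed, and one is forced to a non-cyclic soluble group. The natural construction is to write $u=\ell_{1}+\ell_{2}$ with $\ell_{1},\ell_{2}$ coprime to $p$, invoke Proposition \ref{AltSolTrans} to obtain soluble transitive subgroups $A\le\Alt(\ell_{1})$ and $B\le\Alt(\ell_{2})$, and set $T_{0}:=A\times B\le\Alt(\ell_{1})\times\Alt(\ell_{2})\le\Alt(u)$, a soluble subgroup with exactly the two desired orbits.

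The main obstacle is then to lift $T_{0}$ to a soluble $T\le H$ with $\pi(T)=T_{0}$. In contrast with the cyclic case, an arbitrary choice of preimages of two generators of $T_{0}$ may generate a non-soluble subgroup of $H$, because the kernel $\core_{H}(H_{1})$ of $\pi$ need not be soluble; the difficulty is precisely to arrange that $T\cap\ker\pi$ is soluble. I expect this to be the crux, to be settled by choosing the lift carefully inside $\pi^{-1}(T_{0})$ (for instance via a complement, available by Schur--Zassenhaus when a coprimality condition between $|T_{0}|$ and $|\ker\pi|$ can be secured, and otherwise by a more delicate choice of preimages). Finally, the smallest degenerate instances fall outside this scheme — notably $u=3$, $p=3$, where $\Alt(3)$ has no subgroup with exactly two orbits and the only coprime split $3=1+2$ forces a block of size $2$ on which $\Alt(2)$ is trivial — and these I would dispose of by a direct analysis of the induced module $V\uparrow^{H}_{H_{1}}$.
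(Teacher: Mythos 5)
Your construction of the soluble subgroup inside $U$ is essentially the paper's own: the paper writes $u=tp+k$ and splits the points as $(tp-1)+(k+1)$ or $(tp+1)+(k-1)$, appealing to Proposition \ref{AltSolTrans} exactly as you do (your case division via single permutations is in fact cleaner). But as submitted your argument is incomplete at precisely the two places you flag, so the gaps must be addressed. The first, the lifting problem, is real: Lemma \ref{pmodlemma} as stated requires a soluble subgroup of $H$ itself, not of the quotient $U\cong H/\core_{H}(H_{1})$, and the paper silently elides this point. It is settled, however, not by a ``delicate choice of preimages'' and certainly not by Schur--Zassenhaus (no coprimality between $|T_{0}|$ and $|\ker\pi|$ can be arranged in general), but by a standard supplement argument: if $N\trianglelefteq X$ and $X/N$ is soluble, then $X=TN$ for some soluble $T\le X$. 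Indeed, induct on $|X|$: if $N$ has a proper supplement $Y<X$, then $Y/(Y\cap N)\cong X/N$ is soluble, so induction applied to $(Y,\,Y\cap N)$ gives a soluble $T$ with $T(Y\cap N)=Y$, whence $TN\supseteq YN=X$; if $N$ has no proper supplement, then $N\le\Phi(X)$, which is nilpotent, so $X$ is itself soluble and $T=X$ works. Applying this with $X:=\pi^{-1}(T_{0})$ and $N:=\ker\pi$ yields a soluble $T\le H$ with $\pi(T)=T_{0}$, hence with exactly the orbits of $T_{0}$, and Lemma \ref{pmodlemma} then applies. (The same remark is needed to justify the paper's own application of that lemma.)

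The second flagged case, $u=3$, $p=3$, $U=\Alt(3)$, is also a genuine issue --- indeed it is a gap in the paper's own proof, whose recipe degenerates there to the trivial group with three singleton orbits, and, as you observe, $\Alt(3)$ has no subgroup with at most two orbits of $3'$-length. But the ``direct analysis'' you defer is short and should be carried out. Here $H_{1}=\core_{H}(H_{1})\trianglelefteq H$, since the point stabiliser in $U\cong C_{3}$ is trivial. Since $|H:H_{1}|=3$, a Sylow $3$-subgroup $P$ of $H$ is not contained in $H_{1}$; pick $g\in P\setminus H_{1}$, so that $V\uparrow^{H}_{H_{1}}=V\oplus Vg\oplus Vg^{2}$ with $g^{3}\in H_{1}$, and the fixed space of $g$ on $V\uparrow^{H}_{H_{1}}$ is $\{v+vg+vg^{2}:v\in V,\ vg^{3}=v\}$, of dimension at most $a$. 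Now any generating set of a submodule $S$ as an $\mathbb{F}P$-module also generates it as an $\mathbb{F}H$-module, and $\mathbb{F}P$ is a local ring whose radical contains $g-1$; hence $d_{H}(S)\le d_{P}(S)=\dim S-\dim SJ(\mathbb{F}P)\le\dim S-\dim S(g-1)=\dim\ker(g-1)|_{S}\le a$. So in this residual case every submodule is even $a$-generated. With these two repairs your argument becomes a complete proof --- and a more careful one than the paper's, which is deficient on both points.
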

\begin{proof} We claim that $U$ contains a soluble subgroup $T$ which has at most two orbits, and each orbit has $p'$-length. To see this, assume first that $p=2$. Then since $n$ is either odd, or a sum of two odd numbers, we can take $T:=\langle x_{1}x_{2}\rangle$, where $x_{1}$ is a cycle of odd length, either $x_{2}=1$ or $x_{2}$ is a cycle of odd length, and $n$ is the sum of the orders (i.e. lengths) of $x_{1}$ and $x_{2}$.

So assume that $p>2$, and write $n=tp+k$, where $0\le k\le p-1$. If $k\neq p-1$, then take $T_{1}$ to be a soluble transitive subgroup of $\Alt(tp-1)$, and take $T_{2}$ to be a soluble transitive subgroup of $\Alt(k+1)$ (the existence of these groups is guaranteed by Proposition \ref{AltSolTrans}). If $k=p-1$, then take $T_{1}$ to be a soluble transitive subgroup of $\Alt(tp+1)$, and take $T_{2}$ to be a soluble transitive subgroup of $\Alt(k-1)$ (note that $k-1>0$ since $p>2$). Finally, taking $T:=T_{1}\times T_{2}\le \Alt(n)$ give us what we need, and proves the claim. 

The result now follows immediately from Lemma \ref{pmodlemma}.\end{proof}

\begin{Lemma}\label{MainLemma} Let $R$ be a finite group, let $U$ and $V$ be permutation groups of degree $u$ and $v$ respectively, and let $S$ be a large subgroup of the wreath product $U\wr V$. Also, let $G$ be a large subgroup of the wreath product $W:=R\wr S$. If $U\in \left\{\Alt{(u)},\Sym{(u)}\right\}$, then $d(G)\le 2a(R)v+d(S)$.\end{Lemma}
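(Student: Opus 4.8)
The plan is to use the short exact sequence coming from the base group of $W=R\wr S$. Write $B=R_{1}\times\hdots\times R_{s}$ for the base group, where $s=uv$ is the degree of $S$, and set $N:=G\cap B$. Since $G$ is large, condition (1) gives $\pi(G)=S$, so $G/N\cong S$ and hence $d(G)\le d(S)+d_{G}(N)$, where $d_{G}(N)$ denotes the least number of elements of $N$ whose $G$-conjugates generate $N$ (that is, the number of generators of $N$ as a normal subgroup of $G$). It therefore suffices to prove that $d_{G}(N)\le 2a(R)v$.

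To bound $d_{G}(N)$ I would filter $N$ using a chief series $1=\overline{R}_{0}\lhd\overline{R}_{1}\lhd\hdots\lhd\overline{R}_{m}=R$ of $R$. Because each $\overline{R}_{j}\lhd R$, the coordinatewise products $B^{(j)}:=\overline{R}_{j}\times\hdots\times\overline{R}_{j}$ are normal in $B$ and invariant under $G$ (the top group only permutes coordinates, and the base acts within each factor), so $N\cap B^{(0)}\le\hdots\le N\cap B^{(m)}=N$ is a $G$-invariant series. Lifting normal generators through the series gives the additive bound $d_{G}(N)\le\sum_{j=1}^{m}d_{G}((N\cap B^{(j)})/(N\cap B^{(j-1)}))$, and each factor here is a $G$-section of $(\overline{R}_{j}/\overline{R}_{j-1})^{s}$, the $s$-th power of a chief factor of $R$. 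Writing $k_{j}$ for the composition length of $\overline{R}_{j}/\overline{R}_{j-1}$, so that $\sum_{j}k_{j}=a(R)$, the claim will follow once I show that each such factor is generated, as a $G$-group, by at most $2k_{j}v$ elements.

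The heart of the argument is this last bound, and it is where the hypothesis on $U$ and the largeness of $G$ and $S$ enter. For an abelian chief factor $A:=\overline{R}_{j}/\overline{R}_{j-1}$, of dimension $k_{j}$ over $\mathbb{F}_{p}$, the factor above is an $\mathbb{F}_{p}[G]$-module which, on each $G$-orbit of coordinates, is a submodule of an induced module $A\uparrow^{G}_{H_{1}}$ for a coordinate stabilizer $H_{1}$. The crucial structural point is that the coordinates carry the block system coming from $S\le U\wr V$: there are $v$ blocks of size $u$, the block stabilizer in $G$ induces $U\in\{\Alt(u),\Sym(u)\}$ on the $u$ points of its block (by largeness of $S$ in $U\wr V$), and largeness of $G$ in $R\wr S$ makes the relevant coordinate projections full. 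I would then exploit, \emph{inside each block}, the soluble group with at most two orbits of $p'$-length furnished by the proof of Proposition \ref{2aProp} (via Proposition \ref{AltSolTrans}), assembling these into a soluble subgroup of $G$ whose image on the coordinates has at most $2v$ orbits on the coordinates, all of $p'$-length. Feeding this into Lemma \ref{pmodlemma} block by block (equivalently, applying the estimate of Proposition \ref{2aProp} at the level of a single block) yields a bound of the shape $k_{j}\sum_{i}(t_{i})_{p}\le 2k_{j}v$ on each factor, and summing over $j$ gives $d_{G}(N)\le 2a(R)v$.

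I expect two points to be the main obstacles. The first is the construction just described: the $\Alt/\Sym$ structure that powers Proposition \ref{2aProp} is present only \emph{within} a block, so the saving of a factor $u$ must be extracted block by block, and one must verify that largeness genuinely allows these blockwise soluble groups to be assembled inside $G$ with full coordinate projections — this is precisely what the two defining conditions of a large subgroup are designed to deliver, and checking it carefully (together with the fact that the relevant sections are submodules, not full induced modules) is the technical core. The second obstacle is that the chief factors of $R$ need not be abelian: for a nonabelian chief factor $T^{k_{j}}$ the module lemmas do not apply directly, and I would need a parallel generation estimate for the corresponding nonabelian layer, again using the blockwise $\Alt/\Sym$ action to keep the count at $2k_{j}v$.
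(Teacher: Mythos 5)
Your overall skeleton is the same as the paper's: the reduction $d(G)\le d(S)+d_{G}(G\cap B)$ followed by the coordinatewise chief-series filtration of $G\cap B$ is exactly how the paper proceeds (there it is organised as induction on $R$ through a minimal normal subgroup $L$, whose distinct $G$-conjugates generate precisely your first layer $L^{uv}$). But the two points you defer as ``obstacles'' are the entire content of the proof, and your proposal contains neither of the ideas that resolve them. The nonabelian layers are the clearest gap: you say you would need ``a parallel generation estimate,'' but no module-theoretic analogue exists or is needed. The paper's observation (citing the proof of Lemma 5.1 of \cite{GT}) is that when $L$ is a \emph{nonabelian} minimal normal subgroup of $R$ and $K\cong L^{uv}$ is the product of its $G$-conjugates, the intersection $G\cap K$ is either trivial or a minimal normal subgroup of $G$, hence is normally generated by at most \emph{one} element; this gives $d(G)\le 1+d(G/G\cap K)$, far stronger than the $2va(L)$ bound you would be aiming for, and it uses neither the hypothesis $U\in\{\Alt(u),\Sym(u)\}$ nor any orbit counting. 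Without this (or a substitute) your argument simply does not treat insoluble $R$.

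The abelian layers are also not established as you describe them, because the ``assembly'' step is in general impossible. Largeness of $S$ in $U\wr V$ only controls the projections $\rho_{i}(N_{S}(U_{i}))$, i.e.\ the group each block stabiliser induces \emph{on its own block}; it gives no control whatsoever over $S\cap U^{v}$, which can even be trivial (twisted-wreath-type embeddings of $V$ into $U\wr V$ are large in the paper's sense). So there need not exist any subgroup of $S$, soluble or otherwise, that induces prescribed soluble groups simultaneously on the separate blocks, and the global soluble subgroup with at most $2v$ orbits of $p'$-length that you want to feed into Lemma \ref{pmodlemma} need not exist. The paper never performs such a cross-block construction: it takes $H_{1}\le H\le G$ to be the preimages of a point stabiliser and a block stabiliser of $S$, identifies $K_{1}:=K\cap B_{1}\cong L^{u}$ with the induced module $L\uparrow^{H}_{H_{1}}$ via Alperin's dimension criterion, and applies Proposition \ref{2aProp} only there --- the one place where the induced permutation group genuinely is $U\in\{\Alt(u),\Sym(u)\}$ and the soluble-subgroup construction makes sense. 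The passage from ``every $H$-submodule of $K_{1}$ needs at most $2a$ generators'' to ``every $G$-submodule of $K\cong K_{1}\uparrow^{G}_{H}$ needs at most $2av$ generators'' is then a general fact about submodules of induced modules, costing the factor $|G:H|=v$ and requiring no solubility or orbit hypotheses at the outer level. Restructuring your abelian case along these two-stage lines (and adding the minimal-normal-subgroup argument for the nonabelian case) is what is needed to make the proof work.
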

\begin{proof} Clearly we may assume that $R$ is nontrivial. Let $B$ denote the base group of $W$, so that $G\cong R^{uv}$. Since $G/G\cap B\cong S\le U\wr V$, we may choose subgroups $H_{1}\le H$ of $G$, containing $G\cap B$, such that $H_{1}/G\cap B$ is a point stabiliser in $S$, and $H/G\cap B$ is the stabiliser of a block $\Delta$ of size $u$ in $S$. Hence, $|G:H_{1}|=uv$ and $|G:H|=v$. 

Since $G/G\cap B\cong S\le U\wr V$ is large, $H^{\Delta}\cong U$. Note also that the permutation action of $S$ corresponds to the action of $S$ on $B$ (by permutation of the direct factors in $B\cong R^{uv}$); hence, since $H/G\cap B$ stabilises a block of size $u$, $H$ normalises a subgroup $B_{1}\cong R^{u}\le B$. In the same way, $H_{1}$ normalises one of the direct factors in $B\cong R^{uv}$: identify $R$ with this direct factor.  

Next, let $L$ be a minimal normal subgroup of $R$, and, viewing $L$ as a subgroup of $R\le B$, let $K$ be the direct product of the distinct $G$-conjugates of $L$. Also, let $K_{1}:=K\cap B_{1}\cong L^{u}$. If $L$ is nonabelian, then $G\cap K$ is either trivial or a minimal normal subgroup of $G$ (see \cite[proof of Lemma 5.1]{GT}), so $d(G)\le 1+d(G/G\cap K)\le a(L)+d(G/G\cap K)$. 

Assume now that $L$ is elementary abelian, of order $p^{a}$ say. Then $K_{1}$ is a $H$-module, generated by the $H_{1}$-module $L$, and $\dim{K_{1}}=u\dim{L}=|H:H_{1}|\dim{L}$. Thus, by \cite[Corollary 3, page 56]{Alp}, $K_{1}$ is isomorphic to the $H$-module induced from the $H_{1}$-module $L$. By a similar argument, $K$, as a $G$-module, is isomorphic to the $G$-module induced from the $H$-module $K_{1}$. Hence, since each $H$-submodule of $K_{1}$ can be generated by $2a$ elements by Proposition \ref{2aProp}, it follows that each $G$-submodule of $K\cong {K_{1}}\uparrow^G_{H}$ can be generated by $2|G:H|a=2va$ elements. Thus, $d(G)\le d_{G}(G\cap K)+d(G/G\cap K)\le 2va(L)+d(G/G\cap K)$.

We are now ready to prove the lemma by induction on $R$: by the previous two paragraphs, in each of the cases of $L$ being abelian or nonabelian we have, in particular, $d(G)\le 2va(L)+d(G/G\cap K)$. If $R=L$ then the result follows, and this can serve as the base step for induction. So assume that $R>L$. Note that $G/G\cap K$ is a large subgroup of the wreath product $R/L \wr S$ satisfying the hypothesis of the lemma, so the inductive hypothesis implies that $d(G/G\cap K)\le 2va(R/L)+d(S)$. The proof is now complete, since $a(R)=a(R/L)+a(L)$.\end{proof}

\section{Proof of Theorem \ref{LimitTheorem}}
We are now ready to prove Theorem \ref{LimitTheorem}.       
\begin{proof}[Proof of Theorem \ref{LimitTheorem}] Let $G$ be a transitive permutation group of degree $n$, and let $W=R_{1}\wr R_{2}\wr\hdots\wr R_{t}$ be a primitive decomposition of $G$, where each $R_{i}$ is primitive of degree $r_{i}$ say. Also, let $d=\bl_{W}(G)$. If $G$ is primitive, then since $d(\Alt{(n)})\le d(\Sym{(n)})\le 2$, Theorems \ref{derekthm} and \ref{Maroti} imply that $f(n)/n^{2}\le 2n\log{n}/n^{2}$, which tends to $0$ as $n$ tends to $\infty$. 

So we may assume that $G$ is imprimitive. Note that if we set $r'=\prod_{j\le k}r_{j}$ and $s'=\prod_{j>k}r_{j}$, for some $1\le k\le t-1$, then Proposition \ref{OrderBound} implies that \begin{align}\log{|G|}\le r's'\log{d}\end{align}
Clearly we may also assume that $n\ge 51 (>4^{c_{2}})$.

Before proceeding, we fix some notation: let $R=R_1$, $S=\pi_{1}(G)\le R_2 \wr\hdots\wr R_t$, $r=r_{1}$, and $s=n/r$, so that $S$ is transitive of degree $s$. Also, if one of the $R_j$ for $j\ge 2$, say $R_i$, is an alternating or symmetric group of degree $d$, then set $\ws{R}:=R_1 \wr R_2 \wr \hdots \wr R_{i-1}$, $\ws{S}:=\pi_{i-1}(G)\le R_i \wr \hdots \wr R_t$, $\ws{r}:=\prod_{j<i} r_j$, and $\ws{s}:=n/\ws{r}$. Otherwise, set $\ws{R}:=R$, $\ws{S}:=S$, $\ws{r}:=r$, and $\ws{s}:=s$. Finally, let $C:=c_{0}+2$, where $c_{0}$ is as in Theorem \ref{pyber}. 

We split the remainder of the proof into two cases: suppose first that either $d=r$ or $d \le \max\left\{\log{\ws{r}},\log{\ws{s}}\right\}$. Then, from the definitions of $r$, $s$, $\ws{r}$ and $\ws{s}$, we see that either $d=r$, or $d\neq r$ and one of the following holds \begin{enumerate}[(a)]
\item $d\le \log{\ws{s}}\le \log{s}$, or;
\item $\log{\ws{s}}<d\le \log{\ws{r}}$. In this case, either $\ws{r}\le s$, in which case $d\le \log{s}$; or $\ws{r}>s$, in which case $r>\ws{s}$. Since either $\ws{s}>d$ or $d=c_{2}$ and $s=\ws{s}=2$ (recall that $d\neq r$), it follows that $\ws{r}>s$ implies that $r>d$.
\end{enumerate}
Now, using Proposition \ref{pq1}, there exists a constant $b_{1}'$ such that
$$d(G)\le  \frac{a(R)b_{1}'s}{\sqrt{\log {s}}}+d(\pi(G))$$
Combining this with Theorems \ref{TransTheoremCor1} and \ref{pyber}, and the inequality at (3.1), we get
\begin{align*} \frac{d(G) \log{|G|}}{n^2} &\le  \frac{(Cb_{1}'\log{r} + c_{1})s}{\sqrt{\log {s}}}\frac{rs\log{d}}{r^{2}s^{2}}\\
                               &= \frac{(Cb_{1}'\log r + c_{1})\log d}{r\sqrt{\log s}}\end{align*}
Since either $d\le r$ or $d\le \log{s}$ (see (a),(b) and the preceding comment above), this goes to $0$ if either $r$ or $s$ is increasing, which gives us what we need.

Finally, assume that $d>\max\left\{\log{\ws{r}},\log{\ws{s}}\right\}$, and that $d\neq r$. Since $n=\ws{r}\ws{s}$ and $n>4^{c_{2}}$, we have $d>c_{2}$, so one of the $R_j$ for $j\ge 2$, must be an alternating or symmetric group of degree $d$. Thus, by definition we have $\ws{R}:=R_1 \wr R_2 \wr \hdots \wr R_{i-1}$, $\ws{S}:=\pi_{i-1}(G)\le R_i \wr \hdots \wr R_t$, and $\ws{r}:=\prod_{j<i} r_j$. Then, by Lemma \ref{MainLemma} and Theorems \ref{TransTheoremCor1} and \ref{TransCompLength}, we have
\begin{align*} d(G) &\le \frac{a(\ws{R})\ws{s}}{d}+d(\ws{S}) &\text{(by Lemma \ref{MainLemma})}\\
&\le  \frac{a(\ws{R})\ws{s}}{d} +\frac{(2b_{1}'+c_{1})\frac{\ws{s}}{d}}{\sqrt{\log {\frac{\ws{s}}{d}}}} &\text{(by Proposition \ref{pq1} and Theorem \ref{TransTheoremCor1})} \\
&\le \frac{\frac{3}{2}\ws{r}\ws{s}}{d} +\frac{(2b_{1}'+c_{1})\frac{\ws{s}}{d}}{\sqrt{\log {\frac{\ws{s}}{d}}}} &\text{(by Theorem \ref{TransCompLength})}\end{align*}
We make a further comment: the second inequality above follows from Proposition \ref{pq1}, since $a(R_{i})\le 2$. Combining the last inequality above with the upper bound at (3.1), we have
\begin{align*} \frac{d(G) \log{|G|}}{n^2} &\le  \left[\frac{\frac{3}{2}\ws{r}\ws{s}}{d} +\frac{(2b_{1}'+c_{1})\frac{\ws{s}}{d}}{\sqrt{\log {\frac{\ws{s}}{d}}}}\right]\frac{\ws{r}\ws{s}\log{d}}{\ws{r}^{2}\ws{s}^{2}}\\
                               &\le \frac{\frac{3}{2}\log{d}}{d} + \frac{(2b_{1}'+c_{1})\log{d}}{\ws{r}d\sqrt{\log{\frac{\ws{s}}{d}}}}\end{align*}
Since $d>\max\left\{\log{\ws{r}},\log{\ws{s}}\right\}$, $d\le \ws{s}$ and $n=\ws{r}\ws{s}$, the result now follows.\end{proof}

\end{document}